\newtheorem{theorem}{Theorem}
\newtheorem{lemma}[theorem]{Lemma}
\numberwithin{equation}{section}
\begin{document}

\title[Implementation of the HRR formula]{Efficient implementation of the \\ Hardy-Ramanujan-Rademacher formula}
\author{Fredrik Johansson}
\extraline{Supported by Austrian Science Fund (FWF) grant Y464-N18 (Fast Computer Algebra for Special Functions).}

\classno{11Y55 (primary), 11-04, 11P83 (secondary)}

\maketitle

\begin{abstract}
We describe how the Hardy-Ramanujan-Rademacher formula can be implemented to allow the partition function $p(n)$ to be computed with softly optimal complexity $O(n^{1/2+o(1)})$ and very little overhead. A new implementation based on these techniques achieves speedups in excess of a factor 500 over previously published software and has been used by the author to calculate $p(10^{19})$, an exponent twice as large as in previously reported computations.

We also investigate performance
for multi-evaluation of $p(n)$, where our implementation of the
Hardy-Ramanujan-Rademacher formula becomes
superior to power series methods on far denser sets of indices than
previous implementations. As an application, we
determine over 22 billion new congruences for the partition function,
extending Weaver's tabulation of 76,065 congruences.
\end{abstract}

\section{Introduction}

\noindent Let $p(n)$ denote the number of partitions of $n$, or the number of ways that $n$ can be written as a sum of positive integers without regard to the order of the terms (A000041 in \cite{oeis}). The classical way to compute $p(n)$ uses the generating function representation of $p(n)$ combined with Euler's pentagonal number theorem
\begin{equation}
\label{eq:power}
\sum_{n=0}^{\infty} p(n) x^n = \prod_{k=1}^{\infty} \frac{1}{1-x^k} =
\left(\sum_{k=-\infty}^{\infty} (-1)^k x^{k(3k-1)/2}\right)^{-1}
\end{equation}
from which one can construct the recursive relation
\begin{equation}
\label{eq:recursion}
p(n) = \sum_{k=1}^n (-1)^{k+1}\left(
p\left(n-\frac{k(3k-1)}{2}\right)+
p\left(n-\frac{k(3k+1)}{2}\right)\right).
\end{equation}

Equation \eqref{eq:recursion} provides a simple and reasonably efficient way to compute the list of values $p(0), p(1), \ldots, p(n-1), p(n)$. Alternatively, applying FFT-based power series inversion to the right-hand side of \eqref{eq:power} gives an asymptotically faster, essentially optimal algorithm for the same set of values.

An attractive feature of Euler's method, in both the recursive and FFT incarnations, is that the values can be computed more efficiently modulo a small prime number. This is useful for investigating partition function congruences, such as in a recent large-scale computation of $p(n)$ modulo small primes for $n$ up to $10^9$ \cite{calkin}.

While efficient for computing $p(n)$ for all $n$ up to some limit, Euler's formula is impractical for evaluating $p(n)$ for an isolated, large $n$. One of the most astonishing number-theoretical discoveries of the 20th century is the Hardy-Ramanujan-Rademacher formula, first given as an asymptotic expansion by Hardy and Ramanujan in 1917 \cite{harj} and subsequently refined to an exact representation by Rademacher in 1936 \cite{rade}, which provides a direct and computationally efficient expression for the single value $p(n)$.

Simplified to a first-order estimate, the Hardy-Ramanujan-Rademacher formula states that
\begin{equation}
p(n) \sim \frac{1}{4n\sqrt{3}} \, e^{\pi \sqrt{2n/3}},
\label{eq:asymp}
\end{equation}
from which one gathers that $p(n)$ is a number with roughly $n^{1/2}$ decimal digits. The full version can be stated as
\begin{align}
\label{eq:rade1}
p(n) &= \sum_{k=1}^{N} \left(\sqrt{\frac{3}{k}} \, \frac{4}{24n-1}\right) A_k(n) \; U\left(\frac{C(n)}{k}\right) + R(n,N), \\
\label{eq:rade2}
U(x) &= \cosh(x) - \frac{\sinh(x)}{x}, \quad C(n) = \frac{\pi}{6} \sqrt{24n-1}, \\
\label{eq:rade3}
A_k(n) &= \sum_{h=0}^{k-1} \delta_{\gcd(h,k),1}
    \exp\left(\pi i \left[s(h,k) - \frac{2hn}{k} \right]\right)
\end{align}
where $s(h,k)$ is the Dedekind sum
\begin{equation}
\label{eq:dedek}
s(h,k) = \sum_{i=1}^{k-1} \frac{i}{k} \left( \frac{hi}{k} -
\left\lfloor \frac{hi}{k} \right\rfloor - \frac{1}{2} \right)
\end{equation}
and where the remainder satisfies $|R(n,N)| < M(n,N)$ with
\begin{equation}
\label{eq:bound}
M(n,N) = \frac{44 \pi^2}{225 \sqrt{3}} N^{-1/2} +
         \frac{\pi \sqrt{2}}{75} \left(\frac{N}{n-1}\right)^{1/2}
         \sinh\left(\frac{\pi}{N} \sqrt{\frac{2n}{3}}\right).
\end{equation}

It is easily shown that $M(n, cn^{1/2}) \sim n^{-1/4}$ for every positive $c$. Rademacher's bound \eqref{eq:bound} therefore implies that $O(n^{1/2})$ terms in \eqref{eq:rade1} suffice to compute $p(n)$ exactly by forcing $|R(n,N)| < 1/2$ and rounding to the nearest integer. For example, we can take $N = \lceil n^{1/2} \rceil$ when $n \ge 65$.

In fact, it was pointed out by Odlyzko \cite{odlyzko,knuth05} that the Hardy-Ramanujan-Rademacher formula ``gives an algorithm for calculating $p(n)$ that is close to optimal, since the number of bit operations is not much larger than the number of bits of $p(n)$''.  In other words, the time complexity should not be much higher than the trivial lower bound $\Omega(n^{1/2})$ derived from \eqref{eq:asymp} just for writing down the result. Odlyzko's claim warrants some elaboration, since the Hardy-Ramanujan-Rademacher formula ostensibly is a triply nested sum containing $O(n^{3/2})$ inner terms.

The computational utility of the Hardy-Ramanujan-Rademacher formula was, of course, realized long before the availability of electronic computers. For instance, Lehmer \cite{lehmer3} used it to verify Ramanujan's conjectures $p(599) \equiv 0 \bmod 5^3$ and $p(721) \equiv 0 \bmod 11^2$. Implementations are now available in numerous mathematical software systems, including Pari/GP \cite{pari}, Mathematica \cite{wolfram} and Sage \cite{sage}. However, apart from Odlyzko's remark, we find few algorithmic accounts of the Hardy-Ramanujan-Rademacher formula in the literature, nor any investigation into the optimality of the available implementations.

The present paper describes a new C implementation of the Hardy-Ramanujan-Rademacher formula. The code is freely available as a component of the Fast Library for Number Theory (FLINT) \cite{hart}, released under the terms of the GNU General Public License. We show that the complexity for computing $p(n)$ indeed can be bounded by $O(n^{1/2+o(1)})$, and observe that our implementation comes close to being optimal in practice, improving on the speed of previously published software by more than two orders of magnitude. 

We benchmark the code by computing some extremely large isolated values of $p(n)$. We also investigate efficiency compared to power series methods for evaluation of multiple values, and finally apply our implementation to the problem of computing congruences for $p(n)$.

\section{Simplification of exponential sums}

A naive implementation of formulas \eqref{eq:rade1}--\eqref{eq:dedek}
requires $O(n^{3/2})$ integer operations to evaluate Dedekind sums, and $O(n)$ numerical evaluations of complex exponentials (or cosines, since the imaginary parts ultimately cancel out). In the following section, we describe how the number of integer operations and cosine evaluations can be reduced, for the moment ignoring numerical evaluation.

A first improvement, used for instance in the Sage implementation, is to recognize that Dedekind sums can be evaluated in $O(\log k)$ steps using a GCD-style algorithm, as described by Apostol \cite{apostol}, or with Knuth's fraction-free algorithm \cite{knuth} which avoids the overhead of rational arithmetic. This reduces the total number of integer operations to $O(n \log n)$, which is a dramatic improvement but still leaves the cost of computing $p(n)$ quadratic in the size of the final result.

Fortunately, the $A_k(n)$ sums have additional structure as discussed in \cite{lehmer,lehmer2,radewhit,whiteman,hagis}, allowing the computational complexity to be reduced. Since numerous implementers of the Hardy-Ramanujan-Rademacher formula until now appear to have overlooked these results,
it seems appropriate that we reproduce the main
formulas and assess the computational issues in more detail.
We describe two concrete algorithms: one simple, and one asymptotically fast, the latter being implemented in FLINT.

\subsection{A simple algorithm}

Using properties of the Dedekind eta function, one can derive
the formula (which Whiteman \cite{whiteman} attributes to Selberg)
\begin{equation}
A_k(n) = \left(\frac{k}{3}\right)^{1/2}
\!\!\!\!\!\!\!
\sum_{(3l^2+l)/2\equiv-n \bmod k}
\!\!\!\!\!\!\!
(-1)^l \cos\left(\frac{6l+1}{6k} \pi\right)
\label{eq:selb}
\end{equation}
in which the summation ranges over $0 \le l < 2k$ and only $O(k^{1/2})$ terms are nonzero.
With a simple brute force search for solutions of the quadratic equation, this representation provides a way to compute $A_k(n)$ that is both simpler and more efficient than the usual definition \eqref{eq:rade3}.

Although a brute force search requires $O(k)$ loop iterations, the successive quadratic terms can be generated without multiplications or divisions using two coupled linear recurrences. This only costs a few processor cycles per loop iteration, which is a substantial improvement over computing Dedekind sums, and means that the cost up to fairly large $k$ effectively will be dominated by evaluating $O(k^{1/2})$ cosines, adding up to $O(n^{3/4})$ function evaluations for computing $p(n)$.

\begin{algorithm}
\caption{Simple algorithm for evaluating $A_k(n)$}
\label{alg:simplesum}
\begin{algorithmic}
\renewcommand{\algorithmicrequire}{\textbf{Input:}}
\renewcommand{\algorithmicensure}{\textbf{Output:}}
\REQUIRE Integers $k, n \ge 0$
\ENSURE $s = A_k(n)$, where $A_k(n)$ is defined as in \eqref{eq:rade3}
\IF {$k \le 1$}
    \RETURN $k$
\ELSIF {$k = 2$}
    \RETURN $(-1)^n$
\ENDIF
\STATE $(s, r, m) \gets (0, 2, (n \bmod k))$
\FOR {$0 \le l < 2k$}
    \IF {$m = 0$}
        \STATE $s \gets s + (-1)^l \cos\left(\pi(6l+1)/(6k)\right)$
    \ENDIF
\STATE $m \gets m + r$
\STATE \textbf{if} $m \ge k$ \textbf{then} $m \gets m - k$ \COMMENT{$m \gets m \bmod k$}
\STATE $r \gets r + 3$
\STATE \textbf{if} $r \ge k$ \textbf{then} $r \gets r - k$ \COMMENT{$r \gets r \bmod k$}
\ENDFOR
\RETURN $(k/3)^{1/2} \, s$
\end{algorithmic}
\end{algorithm}

A basic implementation of \eqref{eq:selb} is given as Algorithm \ref{alg:simplesum}. Here the variable $m$ runs over the successive values of $(3l^2+l)/2$, and $r$ runs over the differences between consecutive $m$. Various improvements are possible: a modification of the equation allows cutting the loop range in half when $k$ is odd, and the number of cosine evaluations can be reduced by counting the multiplicities of unique angles after reduction to $[0, \pi/4)$, evaluating a weighted sum $\sum w_i \cos(\theta_i)$ at the end -- possibly using trigonometric addition theorems to exploit the fact that the differences $\theta_{i+1}-\theta_i$ between successive angles tend to repeat for many different $i$.

\subsection{A fast algorithm}

From Selberg's formula \eqref{eq:selb}, a still more efficient but considerably more complicated multiplicative decomposition of $A_k(n)$ can be obtained.
The advantage of this representation is that it only
contains $O(\log k)$ cosine factors, bringing the total number of cosine evaluations for $p(n)$
down to $O(n^{1/2} \log n)$. It also reveals exactly when $A_k(n) = 0$ (which is about half the time). We stress that these results are not new; the formulas are given in full detail and with proofs in \cite{whiteman}.

First consider the case when $k$ is a power of a prime. Clearly $A_1(n) = 1$ and $A_2(n) = (-1)^n$. Otherwise let $k = p^{\lambda}$ and $v = 1-24n$. Then, using the notation $(a|m)$ for Jacobi symbols to avoid confusion with fractions, we have
\begin{equation}
A_k(n) =
  \begin{cases}
   (-1)^{\lambda} (-1|m_2) k^{1/2} \sin(4\pi m_2/8k)                            & \text{if } p = 2 \\
   2(-1)^{\lambda+1} (m_3|3) (k/3)^{1/2} \sin(4\pi m_3/3k)   & \text{if } p = 3 \\
   2(3|k) k^{1/2} \cos(4\pi m_p/k)                                              & \text{if } p > 3 \\
  \end{cases}
\label{eq:factor1}
\end{equation}
where $m_2$, $m_3$ and $m_p$ respectively are any solutions of
\begin{align}
(3m_2)^2 & \equiv v  \bmod 8k \\
(8m_3)^2 & \equiv v \bmod 3k \\
(24m_p)^2 & \equiv v \bmod k
\end{align}
provided, when $p > 3$, that such an $m_p$ exists and that $\gcd(v, k) = 1$. If, on the other hand, $p > 3$ and either of these two conditions do not hold, we have
\begin{equation}
A_k(n) =
  \begin{cases}
   0                   & \text{if } v \text{ is not a quadratic residue modulo } k \\
   (3|k) k^{1/2}       & \text{if } v \equiv 0 \bmod p, \lambda = 0 \\
   0                   & \text{if } v \equiv 0 \bmod p, \lambda > 1. \\
  \end{cases}
\label{eq:factor3}
\end{equation}

If $k$ is not a prime power, assume that $k = k_1 k_2$ where $\gcd(k_1, k_2) = 1$.
Then we can factor $A_k(n)$ as $A_k(n) = A_{k_1}(n_1) A_{k_2}(n_2)$, where $n_1, n_2$ are any solutions of the following equations. If $k_1 = 2$, then
\begin{equation}
\begin{cases}
32n_2 \equiv 8n+1 \bmod k_2 \\
n_1 \equiv n-(k_2^2-1)/8 \bmod 2,
\end{cases}
\label{eq:factor2a}
\end{equation}
if $k_1 = 4$, then
\begin{equation}
\begin{cases}
128n_2 \equiv 8n+5 \bmod k_2 \\
k_2^2 n_1 \equiv n-2-(k_2^2-1)/8 \bmod 4,
\end{cases}
\label{eq:factor2b}
\end{equation}
and if $k_1$ is odd or divisible by 8, then
\begin{equation}
\begin{cases}
k_2^2 d_2 e n_1 \equiv d_2 e n + (k_2^2-1)/d_1 \bmod k_1 \\
k_1^2 d_1 e n_2 \equiv d_1 e n + (k_1^2-1)/d_2 \bmod k_2
\end{cases}
\label{eq:factor2}
\end{equation}
where $d_1 = \gcd(24, k_1)$, $d_2 = \gcd(24, k_2)$, $24 = d_1 d_2 e$.

Here $(k^2-1)/d$ denotes an operation done on integers, rather than a modular division.
All other solving steps in \eqref{eq:factor1}--\eqref{eq:factor2} amount to computing greatest common divisors,
carrying out modular ring operations, finding modular inverses, and computing modular square roots.
Repeated application of these formulas results in
Algorithm \ref{alg:fastsum}, where we omit the detailed arithmetic
for brevity.

\begin{algorithm}
\caption{Fast algorithm for evaluating $A_k(n)$}
\label{alg:fastsum}
\begin{algorithmic}
\renewcommand{\algorithmicrequire}{\textbf{Input:}}
\renewcommand{\algorithmicensure}{\textbf{Output:}}
\REQUIRE Integers $k \ge 1$, $n \ge 0$
\ENSURE $s = A_k(n)$, where $A_k(n)$ is defined as in \eqref{eq:rade3}
\STATE Compute the prime factorization $k = p_1^{\lambda_1} p_2^{\lambda_2} \ldots p_j^{\lambda_j}$
\STATE $s \gets 1$
\FOR {$1 \le i \le j$ and \textbf{while} $s \ne 0$}
    \IF {$i < j$}
        \STATE $(k_1, k_2) \gets (p_i^{\lambda_i}, k / p_i^{\lambda_i})$
        \STATE Compute $n_1, n_2$ by solving the respective case of \eqref{eq:factor2a}--\eqref{eq:factor2}
        \STATE $s \gets s \times A_{k_1}(n_1)$
        \COMMENT{Handle the prime power case using \eqref{eq:factor1}--\eqref{eq:factor3}}
        \STATE $(k, n) \gets (k_2, n_2)$
    \ELSE
        \STATE $s \gets s \times A_{k}(n)$ \COMMENT{Prime power case}
    \ENDIF
\ENDFOR
\RETURN $s$
\end{algorithmic}
\end{algorithm}

\subsection{Computational cost}

A precise complexity analysis of Algorithm \ref{alg:fastsum}
should take into account the cost of integer arithmetic.
Multiplication, division, computation of modular inverses, greatest common divisors and Jacobi
symbols of integers bounded in absolute value by $O(k)$
can all be performed with bit complexity $O(\log^{1+o(1)} k)$.

At first sight, integer factorization might seem to pose a problem. We can, however, factor all indices $k$ summed over in \eqref{eq:rade1} in $O(n^{1/2} \log^{1+o(1)} n)$ bit operations. For example, using the sieve of Eratosthenes, we can precompute a list of length $n^{1/2}$ where entry $k$ is the largest prime dividing $k$.

A fixed index $k$ is a product of at most $O(\log k)$ prime powers with exponents bounded by $O(\log k)$. For each prime power, we need $O(1)$ operations with roughly the cost of multiplication, and $O(1)$ square roots, which are the most expensive operations.

To compute square roots modulo $p^{\lambda}$, we can use the Tonelli-Shanks algorithm \cite{tonelli,shanks} or Cipolla's algorithm \cite{cipolla} modulo $p$ followed by Hensel lifting up to $p^{\lambda}$. Assuming that we know a quadratic nonresidue modulo $p$, the Tonelli-Shanks algorithm requires $O(\log^3 k)$ multiplications in the worst case and $O(\log^2 k)$ multiplications on average, while Cipolla's algorithm requires $O(\log^2 k)$ multiplications in the worst case \cite{crand}. This puts the bit complexity of factoring a single exponential sum $A_k(n)$ at $O(\log^{3+o(1)} k)$,
and gives us the following result:

\begin{theorem}
\label{thm:modular}
Assume that we know a quadratic nonresidue modulo $p$ for all primes $p$ up to $n^{1/2}$. Then we can factor all the $A_k(n)$ required for evaluating $p(n)$ using $O(n^{1/2} \log^{3+o(1)} n)$ bit operations.
\end{theorem}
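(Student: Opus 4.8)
The plan is to assemble the estimates from the discussion above and sum over the $N = O(n^{1/2})$ indices $k$ that occur in \eqref{eq:rade1}. Concretely I would bound two things: (i) the one-time cost of obtaining the prime factorizations of all these $k$, and (ii) the cost of one run of Algorithm \ref{alg:fastsum} on a single $k$ once its factorization is known; the theorem then follows by adding $N$ copies of (ii) to (i).

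For (i), I would not factor the indices one by one but all at once: a single pass of the sieve of Eratosthenes over $[2,N]$ records, for each $m \le N$, its largest prime factor, using $O(N\log\log N)$ word operations on integers of $O(\log n)$ bits, that is, $O(n^{1/2}\log^{1+o(1)} n)$ bit operations in total; afterwards the full factorization of any individual $k \le N$ is read off with $O(\log k)$ lookups and exact divisions. Thus stage (i) contributes only $O(n^{1/2}\log^{1+o(1)} n)$, which will be absorbed by stage (ii).

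For (ii), fix $k = p_1^{\lambda_1}\cdots p_j^{\lambda_j} \le N$, so that $j = O(\log k)$, each $\lambda_i = O(\log k)$, and $\sum_i \log p_i^{\lambda_i} = \log k$. Each iteration of the main loop of Algorithm \ref{alg:fastsum} performs a bounded number of coprime splittings via \eqref{eq:factor2a}--\eqref{eq:factor2} together with one prime-power evaluation via \eqref{eq:factor1}--\eqref{eq:factor3}. Every operation appearing there apart from a modular square root --- multiplication, Euclidean division, modular inverse, gcd, Jacobi symbol, all on integers of absolute value $O(k)$ --- costs $O(\log^{1+o(1)} k)$ bit operations, and there are only $O(\log k)$ of them, so these contribute $O(\log^{2+o(1)} k)$. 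The square roots are the only delicate point: to solve $x^2 \equiv v \bmod p_i^{\lambda_i}$ I would first extract a square root modulo $p_i$, using Tonelli--Shanks --- which, given the assumed quadratic nonresidue modulo $p_i$, takes $O(\log^2 p_i)$ modular multiplications on average, or Cipolla's algorithm in $O(\log^2 p_i)$ worst case --- and then Hensel-lift to modulus $p_i^{\lambda_i}$ with $O(\log\lambda_i)$ more multiplications; since a multiplication modulo $p_i^{\lambda_i}$ costs $O(\log^{1+o(1)} k)$ bit operations, a single such square root costs $O(\log^{3+o(1)} p_i^{\lambda_i})$ bit operations.

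Summing the square-root costs over the prime-power factors of $k$ and using the elementary inequality $\sum_i a_i^{s} \le \bigl(\sum_i a_i\bigr)^{s}$, valid for $a_i \ge 0$ and $s \ge 1$, gives $\sum_i \log^{3+o(1)} p_i^{\lambda_i} \le \log^{3+o(1)} k$, so all the square roots for a fixed $k$ together cost $O(\log^{3+o(1)} k)$ bit operations, dominating the $O(\log^{2+o(1)} k)$ spent elsewhere; hence a single $A_k(n)$ is factored in $O(\log^{3+o(1)} k)$ bit operations. Adding this over $k = 1, \dots, N$ yields $O(N\log^{3+o(1)} N) = O(n^{1/2}\log^{3+o(1)} n)$, which also subsumes the $O(n^{1/2}\log^{1+o(1)} n)$ from stage (i), as claimed. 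The one step that needs genuine care --- and the only place one risks losing a logarithmic factor --- is recognizing that the $\log^3$-type cost of each square root is incurred modulo the prime power $p_i^{\lambda_i}$ dividing $k$ rather than modulo $k$ itself, so that after the superadditive aggregation the per-index total is $O(\log^{3+o(1)} k)$ and not $O(\log^{4+o(1)} k)$; the remainder is routine accounting with fast integer arithmetic.
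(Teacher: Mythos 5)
Your proof is correct and follows essentially the same route as the paper: precompute all factorizations by sieving, bound the per-index cost of Algorithm~\ref{alg:fastsum} by $O(\log^{3+o(1)} k)$ with the modular square roots as the dominant term, and sum over $k \le N = O(n^{1/2})$. The one thing you do more carefully than the paper is the aggregation across the prime-power factors of a single $k$: the paper states the Tonelli--Shanks/Cipolla multiplication counts as $O(\log^2 k)$ (or $O(\log^3 k)$ worst case) and passes directly to a per-index bit cost of $O(\log^{3+o(1)} k)$, which does not literally follow if one crudely charges each of the $O(\log k)$ prime powers a full $O(\log^2 k)$ multiplications at $O(\log^{1+o(1)} k)$ bits each. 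Your explicit use of the superadditivity $\sum_i \log^{3} p_i^{\lambda_i} \le \bigl(\sum_i \log p_i^{\lambda_i}\bigr)^3 = \log^3 k$ is precisely what keeps the per-index cost at $O(\log^{3+o(1)} k)$ rather than $O(\log^{4+o(1)} k)$, so your version makes rigorous a step the paper leaves implicit.
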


The assumption in Theorem \ref{thm:modular} can be satisfied with a precomputation that does not affect the complexity. If $n_2(p_k)$ denotes the least quadratic nonresidue modulo the $k$th prime number, it is a theorem of Erd\H{o}s \cite{erdos,pollack} that as $x \to \infty$,
\begin{equation}
\frac{1}{\pi(x)} \sum_{p_k \le x} n_2(p_k) \rightarrow \sum_{k=1}^{\infty} \frac{p_k}{2^k} = C < 3.675.
\end{equation}
Given the primes up to $x = n^{1/2}$, we can therefore build a table of nonresidues by testing no more than $(C + o(1)) \pi(n^{1/2})$ candidates. Since $\pi(n^{1/2}) = O(n^{1/2} / \log n)$ and a quadratic residue test takes $O(\log^{1+o(1)} p)$ time, the total precomputation time is $O(n^{1/2} \log^{o(1)} n)$.

In practice, it is sufficient to generate nonresidues on the fly since $O(1)$ candidates need to be tested on average, but we can only prove an $O(\log^{c} k)$ bound for factoring an isolated $A_k(n)$ by assuming the Extended Riemann Hypothesis which gives $n_2(p) = O(\log^2 p)$ \cite{ankeny}.

\subsection{Implementation notes}

As a matter of practical efficiency, the modular arithmetic should be done with as little overhead as possible. FLINT provides optimized routines for arithmetic with moduli smaller than 32 or 64 bits (depending on the hardware word size) which are used throughout; including, among other things, a binary-style GCD algorithm, division and remainder using precomputed inverses, and supplementary code for operations on two-limb (64 or 128 bit) integers.

We note that since $A_k(n) = A_k(n + k)$, we can always reduce $n$ modulo $k$, and perform all modular arithmetic with moduli up to some small multiple of $k$. In principle, the implementation of the modular arithmetic in FLINT thus allows calculating $p(n)$ up to approximately $n = (2^{64})^2 \approx 10^{38}$ on a 64-bit system, which roughly equals the limit on $n$ imposed by the availability of addressable memory to store $p(n)$.

At present, our implementation of Algorithm \ref{alg:fastsum} simply calls the FLINT routine for integer factorization repeatedly rather than sieving over the indices. Although convenient, this technically results in a higher total complexity than $O(n^{1/2+o(1)})$. However, the code for factoring single-word integers, which uses various optimizations for small factors and Hart's ``One Line Factor'' variant of Lehman's method to find large factors \cite{hart11}, is fast enough that integer factorization only accounts for a small fraction of the running time for any feasible $n$. If needed, full sieving could easily be added in the future.

Likewise, the square root function in FLINT uses the Tonelli-Shanks algorithm and generates a nonresidue modulo $p$ on each call. This is suboptimal in theory but efficient enough in practice.

\section{Numerical evaluation}

We now turn to the problem of numerically evaluating \eqref{eq:rade1}--\eqref{eq:rade2} using arbitrary-precision arithmetic, given access to Algorithm \ref{alg:fastsum} for symbolically decomposing the $A_k(n)$ sums. Although \eqref{eq:bound} bounds the truncation error in the Hardy-Ramanujan-Rademacher series, we must also account for the effects of having to work with finite-precision approximations of the terms.

\subsection{Floating-point precision}

We assume the use of variable-precision binary floating-point arithmetic (a simpler but less efficient alternative, avoiding the need for detailed manual error bounds, would be to use arbitrary-precision interval arithmetic). Basic notions about floating-point arithmetic and error analysis can be found in \cite{higham}.

If the precision is $r$ bits, we let $\varepsilon = 2^{-r}$ denote the unit roundoff. We use the symbol $\hat x$ to signify a floating-point approximation of an exact quantity $x$, having some relative error $\delta = (\hat x - x) / x$ when $x \ne 0$. If $\hat x$ is obtained by rounding $x$ to the nearest representable floating-point number (at most 0.5 ulp error) at precision $r$, we have $|\delta| \le \varepsilon$. Except where otherwise noted, we assume correct rounding to nearest.

A simple strategy for computing $p(n)$ is as follows. For a given $n$, we first determine an $N$ such that $|R(n,N)| < 0.25$, for example using a linear search. A tight upper bound for $\log_2 M(n,N)$ can be computed easily using low-precision arithmetic. We then approximate the $k$th term $t_k$ using a working precision high enough to guarantee

\begin{equation}
|\hat t_k - t_k| \le \frac{0.125}{N},
\label{eq:tbound}
\end{equation}

and perform the outer summation such that the absolute error of each addition is bounded by $0.125 / N$. This clearly guarantees $|\hat p(n) - p(n)| < 0.5$, allowing us to determine the correct value of $p(n)$ by rounding to the nearest integer. We might, alternatively, carry out the additions exactly and save one bit of precision for the terms.

In what follows, we derive a simple but essentially asymptotically tight expression
for a working precision, varying with $k$,
sufficiently high for \eqref{eq:tbound} to hold.
Using Algorithm~\ref{alg:fastsum}, we write the term to be evaluated in
terms of exact integer parameters $\alpha, \beta, a, b, p_i, q_i$ as
\begin{equation}
\label{eq:termk}
t_k = \frac{\alpha}{\beta} \frac{\sqrt a}{\sqrt b} \, U\left(\frac{C}{k}\right)
 \prod_{i=1}^m \cos\left(\frac{p_i \pi}{q_i}\right).
\end{equation}

\begin{lemma}
Let $p \in \mathbb{Z}$, $q \in \mathbb{N}^{+}$ and let $r$ be a precision in bits with $2^r > \max(3q, 64)$. Suppose that $\sin$ and $\cos$ can be evaluated on $(0, \pi/4)$ with relative error at most $2 \varepsilon$ for floating-point input, and suppose that $\pi$ can be approximated with relative error at most $\varepsilon$. Then we can evaluate $\cos(p \pi / q)$ with relative error less than $5.5 \varepsilon$.
\label{thm:cos_error}
\end{lemma}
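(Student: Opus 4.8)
The plan is to reduce the evaluation of $\cos(p\pi/q)$ to an evaluation of $\cos$ or $\sin$ on the fundamental domain $(0,\pi/4)$, and to track the relative error through each reduction step, using the hypothesis $2^r > \max(3q,64)$ to control the sizes of the intermediate integers. First I would argue that we may assume $0 \le p < 2q$ after an exact integer reduction $p \bmod 2q$ (this costs nothing since $p, q$ are exact integers); and then, using the symmetries $\cos(\theta) = -\cos(\pi-\theta)$, $\cos(\theta)=\sin(\pi/2-\theta)$, $\cos(\theta)=\cos(2\pi-\theta)$, reduce further to an angle of the form $t\pi/q$ with the integer $t$ satisfying $0 \le t \le q/4$, so that $t\pi/q \in [0,\pi/4]$, and the target value is $\pm\cos(t\pi/q)$ or $\pm\sin(t\pi/q)$. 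All of these reductions are performed on the integers $p,q$ exactly, so they introduce no error; the only subtlety is the boundary cases (the argument landing exactly at $0$, $\pi/4$, $\pi/2$, etc.), which should be handled separately since there the answer is an exact constant ($0$, $1$, or $1/\sqrt2$) or else the relative error statement is vacuous or trivially checkable.

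Next I would form the floating-point approximation $\widehat{\theta}$ of $\theta = t\pi/q$. We have an approximation $\widehat\pi$ with $|\widehat\pi-\pi|\le\varepsilon\pi$, and then $\widehat\theta = \mathrm{fl}(t\cdot\widehat\pi/q)$ incurs two further rounding errors (the multiplication by the exact integer $t$ and the division by the exact integer $q$), each contributing a factor $(1+\delta)$ with $|\delta|\le\varepsilon$. So $\widehat\theta = \theta(1+\delta_1)(1+\delta_2)(1+\delta_3)$ with each $|\delta_i|\le\varepsilon$, giving $|\widehat\theta - \theta| \le \theta\bigl((1+\varepsilon)^3-1\bigr) \le 3.01\,\theta\varepsilon$ say, using $\varepsilon \le 1/64$ to bound the higher-order terms. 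The key point now is to convert this relative error in the argument into a relative error in $\cos\theta$: by the mean value theorem, $|\cos\widehat\theta - \cos\theta| \le |\widehat\theta-\theta|\,\sup|\sin| \le |\widehat\theta - \theta|$, and to turn this into a \emph{relative} error in $\cos\theta$ I need a lower bound on $|\cos\theta|$ on $(0,\pi/4)$ relative to $\theta$ — i.e. the elementary inequality $|\cos\theta| \ge \cos(\pi/4) = 1/\sqrt2$ combined with $\theta < \pi/4$, so the argument error contributes roughly $(3.01\,\pi/4)/(1/\sqrt2)\,\varepsilon \approx 3.35\varepsilon$ of relative error (and correspondingly for the $\sin$ branch, where one uses $|\sin\theta|\ge (2\sqrt2/\pi)\theta$ on $(0,\pi/4)$ to get the analogous bound). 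Finally, the assumed library routine evaluates $\cos$ (or $\sin$) at the floating-point input $\widehat\theta$ with relative error at most $2\varepsilon$, and the final sign flip / reading off of the exact constant is error-free, so the total relative error is bounded by roughly $(3.35 + 2)\varepsilon$ plus a cross term, which I would check comes in under $5.5\varepsilon$.

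The main obstacle, and the place where the condition $2^r > \max(3q,64)$ really earns its keep, is making sure the integer $t$ never overflows the floating-point format during the reduction, so that the multiplication $t\cdot\widehat\pi$ is the \emph{only} rounding in that step and no catastrophic representability failure occurs — $t \le q/4 < 2^r/12$ is comfortably an exact integer at precision $r$, and intermediate quantities like $2q$ are bounded by $2^r$ as well. The other delicate point is bookkeeping the constant: I will need $\varepsilon$ small enough ($\varepsilon \le 1/64$) that all the $(1+\varepsilon)^k - 1$ terms can be linearized with a controlled slack, and I expect the final constant $5.5$ to come out with a little room to spare, the dominant contributions being the $3.01/\sqrt2 \approx 2.13$-ish wait, let me recompute: $|\widehat\theta - \theta|/|\cos\theta| \le 3.01\,\theta\,\varepsilon \cdot \sqrt2 < 3.01\cdot(\pi/4)\cdot\sqrt2\,\varepsilon \approx 3.35\varepsilon$ from the argument, plus $2\varepsilon$ from the library call, plus a negligible product term, for a total safely below $5.5\varepsilon$. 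Careful attention to whether the reduction can produce $\theta$ arbitrarily close to $0$ (where $\cos\theta\to1$ is harmless, but the $\sin$ branch needs the linear lower bound $\sin\theta\ge(2\sqrt2/\pi)\theta$) is the last thing I would verify.
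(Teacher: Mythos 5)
Your approach matches the paper's proof step for step: exact integer reduction of the angle to $(0,\pi/4)$ via the usual symmetries, three floating-point roundings in forming $\hat\theta = \mathrm{fl}(t\hat\pi/q)$, a Taylor/mean-value bound to convert the argument error into a relative error in $\cos$ or $\sin$ (giving the same $\tfrac{\pi\sqrt 2}{4}$-type factor the paper uses), the library's $2\varepsilon$ on top, and a small cross term — landing safely under $5.5\varepsilon$.

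There is, however, one genuine gap, and it concerns exactly the place you flagged as ``the main obstacle.'' You claim the hypothesis $2^r > 3q$ is there to prevent the integer $t$ from overflowing the floating-point format. It is not — $p$, $q$, $t$ stay exact integers throughout the reduction, and the roundings in $\hat\theta = \mathrm{fl}(t\hat\pi/q)$ are already counted. The actual role of $\varepsilon < 1/(3q)$ is to guarantee that the \emph{rounded} argument $\hat\theta = \theta(1+\delta_x)$ still lies inside $(0,\pi/4)$, which is essential because the hypothesis ``$\sin$ and $\cos$ can be evaluated on $(0,\pi/4)$ with relative error at most $2\varepsilon$'' says nothing outside that interval. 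After reduction the exact angle can be as large as $\theta = p\pi/q \le \frac{(q-1)\pi}{4q}$, leaving a margin of only about $\pi/(4q)$ below $\pi/4$; the three roundings inflate $\theta$ by up to roughly $3\varepsilon\theta$, so one needs $3\varepsilon < 1/q$ to be sure $\hat\theta < \pi/4$. Your proof never performs this check, so the step where you invoke the library accuracy at $\hat\theta$ is unjustified precisely when $p/q$ is close to $1/4$. (The small arithmetic slip — $(1+\varepsilon)^3 - 1 \le 3.05\varepsilon$ for $\varepsilon \le 1/64$, not $3.01\varepsilon$ — does not affect the final constant.)
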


\begin{proof}

We first reduce $p$ and $q$ with exact integer operations so that $0 < 4 p < q$, giving an angle in the interval $(0,\pi/4)$. Then we approximate $x = p \pi / q$ using three roundings, giving $\hat x = x (1+\delta_x)$ where $|\delta_x| \le (1 + \varepsilon)^3 - 1$. The assumption $\varepsilon < 1 / (3 q)$ gives $(q / (q - 1)) (1+\delta_x) < 1$ and therefore also $\hat x \in (0, \pi/4)$.

Next, we evaluate $f(\hat x)$ where $f = \pm \cos$ or $f = \pm \sin$ depending
on the argument reduction. By Taylor's theorem, we have $f(\hat x) = f(x)(1+\delta'_x)$ where
\begin{equation}
|\delta'_x| = \frac{|f(\hat x) - f(x)|}{f(x)} = \frac{x |\delta_x| |f'(\xi)|}{f(x)}
\end{equation}
for some $\xi$ between $x$ and $\hat x$, giving $|\delta'_x| \le (\frac{1}{4} \pi \sqrt 2) |\delta_x|$. Finally, rounding results in
$$\hat f(\hat x) = f(x)(1+\delta) = f(x)(1+\delta'_x)(1+\delta_f)$$ 
where $|\delta_f| \le 2 \varepsilon$. The inequality $\varepsilon < 1/64$ gives $|\delta| < 5.5 \varepsilon$.
\end{proof}

To obtain a simple error bound for $U(x)$ where $x = C / k$, we make the somewhat crude restriction that $n > 2000$.
We also assume $k < n^{1/2}$ and $x > 3$, which are not restrictions: if $N$ is chosen optimally using
Rademacher's remainder bound \eqref{eq:bound}, the maximum $k$ decreases and the minimum $x$ increases with larger $n$. In particular, $n > 2000$ is sufficient with Rademacher's bound (or any tighter bound for the remainder).

We assume that $C$ is precomputed; of course, this only needs to be done once
during the calculation of $p(n)$, at a precision a few bits higher than that of the $k = 1$ term.

\begin{lemma}
Suppose $n > 2000$ and let $r$ be a precision in bits such that $2^r > \max(16 n^{1/2}, 2^{10})$. Let $x = C / k$ where $C$ is defined as in \eqref{eq:rade2} and where $k$ is constrained such that $k < n^{1/2}$ and $x > 3$. Assume that $\hat C = C(n)(1+\delta_C)$ has been precomputed with $|\delta_C| \le 2 \varepsilon$ and that $\sinh$ and $\cosh$ can be evaluated with relative error at most $2 \varepsilon$ for floating-point input. Then we can evaluate $U(x)$ with relative error at most $(9x+15) \varepsilon$.
\label{thm:U_error}
\end{lemma}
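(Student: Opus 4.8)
The plan is to track the relative error through the four arithmetic stages that produce $\widehat U$: forming $\hat x = \mathrm{fl}(\hat C / k)$; evaluating $\widehat{\cosh}(\hat x)$ and $\widehat{\sinh}(\hat x)$; forming $\widehat{\sinh}(\hat x)/\hat x$; and finally subtracting. First I would record the one structural fact that makes everything go through: since $x = C/k \le C < 3 n^{1/2}$ and $\varepsilon = 2^{-r} < (16 n^{1/2})^{-1}$, the product $x\varepsilon$ — and hence $x|\delta|$ for any error term $\delta = O(\varepsilon)$ — is bounded by a fixed small constant, so any factor of the form $e^{x|\delta|}$ or $\coth(x)$ arising below is itself $O(1)$. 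From $\hat C = C(1+\delta_C)$ with $|\delta_C|\le 2\varepsilon$ and one rounding in the (exact-integer) division by $k$, we get $\hat x = x(1+\delta_x)$ with $|\delta_x| \le 3\varepsilon + O(\varepsilon^2)$, and $|\delta_x|\le 2\varepsilon$ in the special case $k=1$ where no division occurs; the bound on $r$ also keeps $\hat x$ essentially in the same range as $x$.

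Next I would propagate through $\cosh$ and $\sinh$ by the mean value theorem: $\cosh(\hat x) = \cosh x + x\delta_x \sinh(\xi_1)$ and $\sinh(\hat x) = \sinh x + x\delta_x\cosh(\xi_2)$ for intermediate points $\xi_1,\xi_2$. Using the hyperbolic addition formulas one bounds $\sinh(\xi_1)/\cosh x \le \sinh(x(1+|\delta_x|))/\cosh x \le e^{x|\delta_x|}$ and $\cosh(\xi_2)/\sinh x \le \coth(x)\,e^{x|\delta_x|} \le \coth(3)\,e^{x|\delta_x|}$, so the relative argument error $\delta_x$ becomes a relative value error bounded by a constant times $x\varepsilon$. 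Folding in the assumed $\le 2\varepsilon$ relative error of the library $\cosh$ and $\sinh$, and then one more rounding for the division $\widehat{\sinh}(\hat x)/\hat x$ (whose $\hat x$ in the denominator partly cancels the $\delta_x$ already present in the numerator, so that its contribution is $O(\varepsilon)$ rather than $O(x\varepsilon)$), yields bounds of the shape $|\widehat{\cosh}(\hat x)-\cosh x|/\cosh x \le (a_1 x + b_1)\varepsilon$ and $|\widehat{\sinh}(\hat x)/\hat x - \sinh x/x|\,/\,(\sinh x/x) \le (a_2 x + b_2)\varepsilon$ with small explicit constants $a_i,b_i$.

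Finally comes the subtraction $\widehat U = \mathrm{fl}\bigl(\widehat{\cosh}(\hat x) - \widehat{\sinh}(\hat x)/\hat x\bigr)$, where the (mild) cancellation in $U(x)=\cosh x - \sinh x/x$ must be quantified. Writing $U(x) = \cosh(x)\bigl(1 - \tanh(x)/x\bigr)$ and using the hypothesis $x>3$ gives $U(x) \ge \tfrac23\cosh x$, hence $\cosh(x)/U(x) \le \tfrac32$ and $(\sinh x/x)/U(x) = (\cosh(x)/U(x))\cdot(\tanh(x)/x) \le \tfrac12$. Therefore the relative error of $\widehat U$ is at most $\tfrac32$ times the relative error of $\widehat{\cosh}(\hat x)$ plus $\tfrac12$ times that of $\widehat{\sinh}(\hat x)/\hat x$ plus the final rounding $\le\varepsilon$; substituting the bounds from the previous step and collecting terms gives $(9x+15)\varepsilon$. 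The only delicate part of the bookkeeping is the coefficient of $x$: a single uniform estimate lands slightly above $9$, so I would split off the case $k=1$ (no division rounding, $|\delta_x|\le 2\varepsilon$) from $k\ge 2$ (where $x \le C/2$, halving $x\varepsilon$ and hence shrinking the $e^{x|\delta_x|}$ factor) to bring the leading constant safely down to $9$. The main obstacle is exactly this linear-in-$x$ amplification: because $\cosh' = \sinh$ has essentially the same magnitude as $\cosh$, a relative error $\delta_x$ in the argument becomes a relative error of order $x\delta_x$ in the value, and $x$ can be as large as $\Theta(n^{1/2})$ — this is why the hypothesis $2^r > 16 n^{1/2}$ is needed, and why keeping the constant at $9$ while also absorbing the cancellation in $U$ requires the case split rather than a single crude pass. (A conceptually cleaner, if less sharp, alternative would be to redo the entire estimate in interval arithmetic.)
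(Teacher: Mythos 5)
Your proposal is correct in spirit but takes a genuinely different decomposition from the paper. You propagate the argument error $\delta_x$ separately through $\cosh$ and $\sinh$, bound the library and rounding errors for each, and then control the cancellation in the final subtraction by the observation that $U(x) = \cosh(x)\bigl(1-\tanh(x)/x\bigr) \ge \tfrac23\cosh(x)$ for $x > 3$, so $\cosh(x)/U(x) \le \tfrac32$ and $(\sinh x/x)/U(x) \le \tfrac12$. The paper instead treats $U$ as a single black box and propagates $\delta_x$ through it directly: writing $\hat U(\hat x) = U(x)(1+\delta'_x)(1+\delta_U)$, it bounds the propagated argument error by
\begin{equation*}
|\delta'_x| \le \frac{x|\delta_x|\,U'(x+x|\delta_x|)}{U(x)} \le \frac{x|\delta_x|\,e^{x+x|\delta_x|}}{2U(x)} \le \frac{x|\delta_x|\,e^{x}}{U(x)} \le 3x|\delta_x|,
\end{equation*}
using $U'(y)\le e^y/2$, $e^{x|\delta_x|}<2$ (from $4x\varepsilon < \log 2$), and $e^x/U(x)\le 3$ for $x\ge 3$, while the evaluation error $\delta_U$ of the expression $\cosh\hat x - \sinh\hat x/\hat x$ is bounded separately by a worst-case sign analysis to get $|\delta_U| < 5.5\varepsilon$. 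Since $|\delta_x| \le (1+2\varepsilon)(1+\varepsilon)-1$, the $3x|\delta_x|$ bound immediately produces the $9x\varepsilon$ leading term, and the hypotheses squeeze the remainder below $15\varepsilon$ --- with no case split on $k$. Your route buys a more transparent, step-by-step accounting, but it pays for it: the factor $e^{x|\delta_x|}$ multiplied by $\cosh(x)/U(x)\le\tfrac32$ plus the separate $\sinh/x$ contribution lands near or slightly above the coefficient $9$, which is exactly why you feel compelled to split off $k=1$; the paper's bound $e^x/U(x)\le 3$ absorbs the subtraction cancellation and the $\cosh$/$\sinh$ amplification in one stroke and avoids the split entirely. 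One small inaccuracy in your sketch: the relative error of $\sinh(\hat x)/\hat x$ against $\sinh(x)/x$ is \emph{not} $O(\varepsilon)$ --- the denominator $\hat x$ cancels only one power of $(1+\delta_x)$, and the dominant argument-propagation term through $\sinh$ is still of size $\sim x|\delta_x|$, as your own stated bound $(a_2 x + b_2)\varepsilon$ in fact reflects.
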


\begin{proof}
We first compute $\hat x = x(1+\delta_x) = (C/k)(1+\delta_C)(1+\delta_0)$ where $|\delta_0| \le \varepsilon$. Next, we compute
\begin{equation}
\hat U(\hat x) = U(\hat x)(1+\delta_U) =
 U(x)(1+\delta'_x)(1+\delta_U) = U(x)(1+\delta)
\label{eq:deltaUbound}
\end{equation}
where we have to bound the error $\delta'_x$ propagated in the composition as well as the rounding error $\delta_U$ in the evaluation of $U(\hat x)$.
Using the inequality $x |\delta_x| < 4 x \varepsilon < \log 2$, we have
\begin{equation}
|\delta'_x|
\le \frac{x |\delta_x| U'(x + x |\delta_x|)}{U(x)}
\le \frac{x |\delta_x| \exp(x + x |\delta_x|)}{2 U(x)}
\le \frac{x |\delta_x| \exp(x)}{U(x)}
\le 3 x |\delta_x|.
\label{eq:delta2bound}
\end{equation}

Evaluating $U(\hat x)$ using the obvious sequence of operations results in
\begin{equation}
|\delta_U| = \frac{\left|\left(\cosh(\hat x)(1+2\delta_1) -
        \dfrac{\sinh(\hat x)}{\hat x}(1+2\delta_2)(1+\delta_3)\right)(1+\delta_4) - U(\hat x)\right|}{U(\hat x)} \\
\end{equation}
where $|\delta_i| \le \varepsilon$ and $\hat x > z$ where $z = 3(1-4\varepsilon)$.
This expression is maximized by setting $\hat x$ as small as possible
and taking $\delta_1 = \delta_4 = -\delta_2 = -\delta_3 = \varepsilon$, which gives

\begin{equation}
|\delta_U| < \frac{\cosh(z)}{U(z)} \varepsilon (3+2 \varepsilon) +
\frac{\sinh(z)}{z \, U(z)} \varepsilon (2 + \varepsilon - 2 \varepsilon^2) < 5.5 \varepsilon
\label{eq:delta3bound}
\end{equation}

Expanding \eqref{eq:deltaUbound} using \eqref{eq:delta2bound} and \eqref{eq:delta3bound} gives $|\delta| < \varepsilon (5.5 + 9 x + 56 x \varepsilon + 33 x \varepsilon^2)$. Finally, we obtain $5.5 + 56 x \varepsilon + 33 x \varepsilon^2 < 15$ by a direct application of the assumptions.
\end{proof}

Put together, assuming floating-point implementations of standard transcendental functions with
at most 1~ulp error (implying a relative error of at most $2\varepsilon$), correctly rounded arithmetic and the constant $\pi$, we have:

\begin{theorem}
Let $n > 2000$. For \eqref{eq:tbound} to hold, it is sufficient to evaluate \eqref{eq:termk} using a precision of $r = \max(\log_2 N + \log_2 |t_k| + \log_2 (10x + 7 m + 22) + 3, \frac{1}{2} \log_2 n + 5, 11)$ bits.
\label{thm:prec}
\end{theorem}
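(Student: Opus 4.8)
The plan is to propagate a \emph{relative} error through the factored form \eqref{eq:termk}, factor by factor, collect the contributions into a single relative error $\delta$ for the computed value $\hat t_k = t_k(1+\delta)$, convert $|\hat t_k - t_k| = |t_k|\,|\delta|$ into the absolute bound \eqref{eq:tbound}, and then solve the resulting inequality for $r$. The first branch of the claimed maximum will drop out of that inequality; the remaining two branches, $\tfrac12\log_2 n + 5$ and $11$, are present precisely to guarantee that $r$ is large enough for the precision hypotheses of Lemmas~\ref{thm:cos_error} and~\ref{thm:U_error} to be in force.

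For the error accounting I would treat each factor of \eqref{eq:termk} in turn. The rational factor $\alpha/\beta$ is obtained from exact integers with $O(1)$ roundings and so has relative error bounded by a small fixed multiple of $\varepsilon$; the surd factor $\sqrt a/\sqrt b$ costs two integer roundings, at worst two square roots with relative error $\le 2\varepsilon$ each, and a division, again a small fixed multiple of $\varepsilon$; the factor $U(C/k)$ has relative error $\le (9x+15)\varepsilon$ by Lemma~\ref{thm:U_error}, whose structural hypotheses $n>2000$, $k<n^{1/2}$, $x>3$ hold by the discussion preceding that lemma and whose precision hypothesis $2^r>\max(16n^{1/2},2^{10})$ is enforced below; and each of the $m$ factors $\cos(p_i\pi/q_i)$ has relative error $<5.5\varepsilon$ by Lemma~\ref{thm:cos_error}, whose precision hypothesis $2^r>\max(3q_i,64)$ is also enforced below, since inspection of \eqref{eq:factor1} shows the reduced denominators satisfy $q_i \le 6k < 6 n^{1/2}$. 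Forming the product of the $m$ cosines and multiplying in the remaining three factors adds $m + O(1)$ further roundings, each $\le\varepsilon$. Writing $\hat t_k = t_k\prod_j(1+\delta_j)$ and $S = \sum_j |\delta_j|$, these contributions sum to $S \le (9x + 7m + 22)\varepsilon$ for the appropriate grouping of the small constants, and since $|\delta| \le e^S - 1$ with $S$ small, the higher-order part of $e^S - 1$ is absorbed by enlarging the coefficient of $x$ from $9$ to $10$, leaving $|\delta| \le (10x + 7m + 22)\varepsilon$.

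It then suffices to impose $|t_k|\,(10x+7m+22)\,2^{-r} \le 2^{-3}/N$, i.e. $r \ge \log_2 N + \log_2 |t_k| + \log_2(10x+7m+22) + 3$, which is the first branch of the maximum. Requiring in addition $r \ge \tfrac12\log_2 n + 5$ gives $2^r \ge 32 n^{1/2}$, which dominates both $16 n^{1/2}$ (needed in Lemma~\ref{thm:U_error}, and covering the internal condition $4x\varepsilon<\log 2$ there since $x\le C(n)<\pi n^{1/2}$) and $3q_i \le 18 n^{1/2}$ (needed in Lemma~\ref{thm:cos_error}); and requiring $r \ge 11$ gives $2^r = 2048$, which dominates the absolute thresholds $64$ and $2^{10}$. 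Taking $r$ to be the maximum of the three expressions therefore validates every invoked bound simultaneously and yields \eqref{eq:tbound}. I expect the only genuine work to be the middle paragraph: one must verify that the constants from the two un-lemma'd prefactors, the $m+O(1)$ combining multiplications, and the absorbed higher-order term really do fit inside $7m+22$ and the single spare unit of $x$, and in particular that $S$ stays comfortably below $1$ — which is ultimately forced by the $\log_2|t_k|$ term, since $|t_k|$ grows essentially like $e^{x}$ up to factors polynomial in $n$ and the $q_i$, making $\varepsilon=2^{-r}$ exponentially small in $x$ exactly when $x$ is large, while for small $x$ the floors $\tfrac12\log_2 n+5$ and $11$ alone keep $S$ tiny. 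Everything else is a routine first-order rounding chain of the kind already carried out in the proofs of Lemmas~\ref{thm:cos_error} and~\ref{thm:U_error}.
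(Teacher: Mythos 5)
Your proposal is correct and follows essentially the same route as the paper's proof: propagate relative errors factor by factor through \eqref{eq:termk} using Lemmas~\ref{thm:cos_error} and~\ref{thm:U_error}, count the $m + O(1)$ combining roundings (the paper makes this exactly $m+6$), collapse everything into the single bound $|\delta| < (10x+7m+22)\varepsilon$, and solve for $r$, with the two lower branches of the maximum serving only to enforce the precision hypotheses of the lemmas. The only stylistic difference is that you bound the accumulated product via $e^S-1$ where the paper multiplies out $(1+\varepsilon)^{m+6}(1+5.5\varepsilon)^m(1+(15+9x)\varepsilon)-1$ explicitly, but both yield the same final inequality.
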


\begin{proof}
We can satisfy the assumptions of lemmas \ref{thm:cos_error} and \ref{thm:U_error}.
In particular, $3q \le 24k < 24 n^{1/2} < 2^r$.
The top-level arithmetic operations in \eqref{eq:termk}, including the square roots, amount to a maximum of $m + 6$ roundings. Lemmas \ref{thm:cos_error} and \ref{thm:U_error} and elementary inequalities give
the relative error bound
\begin{align}
|\delta|
&<
\left(1+\varepsilon\right)^{m+6}
\left(1 + 5.5 \varepsilon\right)^m
\left(1 + (15 + 9x) \varepsilon\right) - 1 \\
&<
\left(1 + \frac{(m+6) \varepsilon}{1 - (m+6)\varepsilon} \right)
\left(1 + \frac{5.5 m \varepsilon}{1 - 5.5 m \varepsilon}\right)
\left(1 + (15 + 9x) \varepsilon\right) - 1 \\
&= \frac{21 \varepsilon + 6.5 m \varepsilon - 33 m \varepsilon^2 - 5.5 m^2 \varepsilon^2 + 9 x \varepsilon}
{(1 - 5.5 \varepsilon m)(1 - \varepsilon (m + 6))} \\
&< (10 x + 7 m + 22) \varepsilon.
\end{align}
The result follows by taking logarithms in \eqref{eq:tbound}.
\end{proof}

To make Theorem \ref{thm:prec} effective, we can use $m \le \log_2 k$ and bound $|t_k|$ using \eqref{eq:rade1} with $|A_k| \le k$ and $U(x) < e^x / 2$, giving
\begin{equation}
\log |t_k| < \frac{(24n-1)^{1/2} \, \pi}{6 k} + \frac{\log k}{2} - \log(24n-1) + \left(\log 2 + \frac{\log 3}{2}\right).
\label{eq:tbound2}
\end{equation}
Naturally, for $n \le 2000$, the same precision bound can be verified to be sufficient through direct computation. We can even reduce overhead for small $n$ by using a tighter precision, say $r = |t_k| + O(1)$, up to some limit small enough to be tested exhaustively (perhaps much larger than 2000). The requirement that $r > \frac{1}{2} \log_2 n + O(1)$ always holds in practice if we set a minimum precision; for $n$ feasible on present hardware, it is sufficient to never drop below IEEE double ($53$-bit) precision.

\subsection{Computational cost}

We assume that $r$-bit floating-point numbers can be multiplied in time $M(r) = O(r \log^{1+o(1)} r)$. It is well known (see \cite{mca}) that the elementary functions exp, log, sin etc. can be evaluated in time $O(M(r) \log r)$ using methods based on the arithmetic-geometric mean (AGM). A popular alternative is binary splitting, which typically has cost $O(M(r) \log^2 r)$ but tends to be faster than the AGM in practice.

To evaluate $p(n)$ using the Hardy-Ramanujan-Rademacher formula, we must add $O(n^{1/2})$ terms each of which can be written as a product of $O(\log k)$ factors. According to \eqref{eq:tbound2} and the error analysis in the previous section, the $k$th term needs to be evaluated to a precision of $O(n^{1/2}/k) + O(\log n)$ bits. Using any combination of $O(M(r) \log^{\alpha} r)$ algorithms for elementary functions, the complexity of the numerical operations is
\begin{equation}
O\left(
\sum_{k=1}^{n^{1/2}}
\log k\,\,
M\!\left(\frac{n^{1/2}}{k}\right)
\log^{\alpha} \frac{n^{1/2}}{k}
\right)
= O\left(n^{1/2} \log^{\alpha+3+o(1)} n\right)
\label{eq:complexity}
\end{equation}
which is nearly optimal in the size of the output. Combined with the cost of the factoring stage, the complexity for the computation of $p(n)$ as a whole is therefore, when properly implemented, softly optimal at $O(n^{1/2+o(1)})$. From \eqref{eq:complexity} with the best known complexity bound for elementary functions, we obtain:

\begin{theorem}
The value $p(n)$ can be computed in time $O(n^{1/2} \log^{4+o(1)} n)$.
\end{theorem}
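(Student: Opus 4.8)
The plan is to obtain this statement as a corollary of the two cost analyses already carried out: Theorem~\ref{thm:modular} for the symbolic decomposition of the exponential sums, and the estimate \eqref{eq:complexity} for the numerical work, once the exponent $\alpha$ appearing there is pinned down.

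First I would account for the symbolic side. Choosing $N$ by a linear search on Rademacher's bound \eqref{eq:bound} costs $O(n^{1/2}\log^{O(1)} n)$, since each trial value of $M(n,N)$ is evaluated only to $O(\log n)$ bits. Building the table of least quadratic nonresidues for the primes up to $n^{1/2}$ costs $O(n^{1/2}\log^{o(1)} n)$ by the Erd\H{o}s bound quoted after Theorem~\ref{thm:modular}. With that table in hand, Theorem~\ref{thm:modular} gives $O(n^{1/2}\log^{3+o(1)} n)$ bit operations to factor all the $A_k(n)$, that is, to produce the integer parameters $\alpha,\beta,a,b,p_i,q_i$ of the representation \eqref{eq:termk} for every $k\le N$.

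Next I would bound the numerical side. By Theorem~\ref{thm:prec} together with the explicit estimate \eqref{eq:tbound2}, the $k$th term must be evaluated at precision $r_k = O(n^{1/2}/k) + O(\log n)$, and it is a product of $m=O(\log k)$ cosine factors, one value of $U$, and two square roots. Using the AGM-based algorithms, each elementary-function call at precision $r$ costs $O(M(r)\log r)$, so $\alpha=1$ in the notation of \eqref{eq:complexity}. Plugging $M(r)=O(r\log^{1+o(1)} r)$ into the displayed sum, I would split the range of $k$ at $k = n^{1/2}/\log n$: for smaller $k$ the precision is $\Theta(n^{1/2}/k)$ and the contribution is $O\big(\sum_k (\log k)(n^{1/2}/k)\log^{2+o(1)}(n^{1/2}/k)\big) = O(n^{1/2}\log^{4+o(1)} n)$, since $\sum_{k\le N} 1/k = O(\log n)$; for larger $k$ the precision has bottomed out at $O(\log n)$ and the at most $n^{1/2}$ such terms contribute only $O(n^{1/2}\log^{2+o(1)} n)$. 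Hence the numerical cost is $O(n^{1/2}\log^{4+o(1)} n)$, matching \eqref{eq:complexity} with $\alpha=1$.

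Finally I would add the two contributions, $O(n^{1/2}\log^{4+o(1)} n) + O(n^{1/2}\log^{3+o(1)} n) = O(n^{1/2}\log^{4+o(1)} n)$, plus the $O(1)$ rounding to the nearest integer, which is legitimate because the per-term bound \eqref{eq:tbound} and the controlled summation error were arranged to give $|\hat p(n)-p(n)| < 1/2$. There is no real obstacle: the substance lies in Theorems~\ref{thm:modular} and \ref{thm:prec} and in \eqref{eq:complexity}, and the only point needing care is the bookkeeping in the summation -- confirming that the harmonic-sum factor $\sum 1/k$ is exactly what upgrades the $\log^{3}$ of a single top-precision elementary evaluation to the claimed $\log^{4}$, and that the low-precision tail is genuinely negligible rather than dominant. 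I would also note in passing that, as observed in the implementation notes, replacing the sieve by repeated trial-division factorization does not respect this bound, so the statement refers to the properly sieved algorithm.
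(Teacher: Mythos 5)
Your overall strategy matches the paper's: combine the factoring cost from Theorem~\ref{thm:modular}, the precision schedule from Theorem~\ref{thm:prec} and~\eqref{eq:tbound2}, and the numerical-cost sum~\eqref{eq:complexity} with $\alpha=1$ for AGM-based elementary functions. Your evaluation of the sum, splitting at $k\approx n^{1/2}/\log n$ and extracting the harmonic factor $\sum 1/k = O(\log n)$, is exactly the calculation the paper leaves implicit when it asserts the right-hand side of~\eqref{eq:complexity}, so you have in fact supplied a useful missing step rather than deviated from the argument.

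There is, however, one genuine omission. You bound the cost of \emph{computing} each term $t_k$, but never account for the cost of \emph{accumulating} the terms into the running sum. The partial sum must ultimately hold $\Theta(n^{1/2})$ bits, so a naive floating-point addition of $t_k$ into a full-precision accumulator costs $\Theta(n^{1/2})$ per term and $\Theta(n)$ overall, which would destroy the claimed bound. The paper singles this out immediately after the theorem as ``a subtle but crucial detail'': the additions must be organized so that the $k$th one costs only $O(n^{1/2}/k)$, e.g.\ by summing in reverse order with a precision that grows with the partial sums, or by maintaining $O(\log n)$ accumulators at geometrically decreasing precisions (this is what Algorithm~\ref{alg:rademain} does with $s_1,s_2$). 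With that amortization the additions contribute $O\bigl(\sum_k n^{1/2}/k\bigr) = O(n^{1/2}\log n)$, which is dominated by your other terms, and the proof goes through. Your phrase ``bookkeeping in the summation'' refers only to the harmonic-sum evaluation of the per-term costs, not to this issue, so you should add a sentence addressing it explicitly. Otherwise the argument is sound.
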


A subtle but crucial detail in this analysis is that the additions in the main sum must be implemented in such a way that they have cost $O(n^{1/2}/k)$ rather than $O(n^{1/2})$, since the latter would result in an $O(n)$ total complexity. If the additions are performed in-place in memory, we can perform summations the natural way and rely on carry propagation terminating in an expected $O(1)$ steps, but many implementations of arbitrary-precision floating-point arithmetic do not provide this optimization.

One way to solve this problem is to add the terms in reverse order, using a precision that
matches the magnitude of the partial sums. Or, if we add the terms in forward
order, we can amortize the cost by keeping separate
summation variables for the partial sums of terms not exceeding
$r_1, r_1 / 2, r_1 / 4, r_1 / 8, \ldots$ bits.


\subsection{Arithmetic implementation}

FLINT uses the MPIR library, derived from GMP, for arbitrary-precision arithmetic, and the MPFR library on top of MPIR for asymptotically fast arbitrary-precision floating-point numbers and correctly rounded transcendental functions \cite{mpir,gmp,mpfr}.
Thanks to the strong correctness guarantees of MPFR, it is
relatively straightforward to write a provably correct implementation
of the partition function using Theorem \ref{thm:prec}.

Although the default functions provided by MPFR are quite fast, order-of-magnitude speedups were found possible with custom routines for parts of the numerical evaluation. An unfortunate consequence is that our implementation currently relies on routines that, although heuristically sound, have not yet been proved correct, and perhaps are more likely to contain implementation bugs than the well-tested standard functions in MPFR.

All such heuristic parts of the code are, however, well isolated, and we expect that they can be replaced with rigorous versions with equivalent or better performance in the future.

\subsection{Hardware arithmetic}

Inspired by the Sage implementation, which was written by Jonathan Bober, our implementation switches to hardware (IEEE double) floating-point arithmetic to evaluate \eqref{eq:termk} when the precision bound falls below 53 bits. This speeds up evaluation of the ``long tail'' of terms with very small magnitude.

Using hardware arithmetic entails some risk. Although the IEEE
floating-point standard implemented on all modern hardware
guarantees 0.5 ulp error for arithmetic operations, accuracy may be lost,
for example, if the compiler generates long-double instructions which trigger
double rounding, or if the rounding mode of the processor has been changed.

We need to be particularly concerned about the accuracy of transcendental
functions. The hardware transcendental functions on the Intel Pentium processor and its
descendants guarantee an error of at most 1~ulp when rounding to
nearest \cite{intel}, as do the software routines in the portable and widely
used FDLIBM library \cite{fdlibm}. Nevertheless, some systems may
be equipped with poorer implementations.

Fortunately, the bound \eqref{eq:bound} and Theorem \ref{thm:prec}
are lax enough in practice that errors up to a few ulp can be tolerated, and we expect
any reasonably implemented double-precision transcendental
functions to be adequate. Most importantly, range reducing the
arguments of trigonometric functions to $(0, \pi/4)$
avoids catastrophic error for large arguments which is a
misfeature of some implementations.


\subsection{High-precision evaluation of exponentials}

MPFR implements the exponential and hyperbolic functions using binary splitting at high precision, which is asymptotically fast up to logarithmic factors. We can, however, improve performance by not computing the hyperbolic functions in $U(x)$ from scratch when $k$ is small. Instead, we precompute $\exp(C)$ with the initial precision of $C$, and then compute $(\cosh(C/k), \sinh(C/k))$ from $(\exp(C))^{1/k}$; that is, by $k$th root extractions which have cost $O((\log k) M(r))$. Using the builtin MPFR functions, root extraction was found experimentally to be faster than evaluating the exponential function up to approximately $k = 35$ over a large range of precisions.

For extremely large $n$, we also speed up computation of the constant $C$ by using binary splitting to compute $\pi$ (adapting code written by H. Xue \cite{xue}) instead of the default function in MPFR, which uses arithmetic-geometric mean iteration.
As has been pointed out previously \cite{zim06}, binary splitting is more than four times faster for computing $\pi$ in practice, despite theoretically having a $\log$ factor worse complexity.  When evaluating $p(n)$ for multiple values of $n$, the value of $\pi$ should of course be cached, which MPFR does automatically.

\subsection{High-precision cosines}

The MPFR cosine and sine functions implement binary splitting, with similar asymptotics as the exponential function. At high precision, our implementation switches to custom code for evaluating $\alpha = \cos(p \pi/q)$ when $q$ is not too large, taking advantage of the fact that $\alpha$ is an algebraic number. Our strategy consists of generating a polynomial $P$ such that $P(\alpha) = 0$ and solving this equation using Newton iteration, starting from a double precision approximation of the desired root. Using a precision that doubles with each step of the Newton iteration, the complexity is $O(\operatorname{deg}(P) M(r))$.

The numbers $\cos(p \pi / q)$ are computed from scratch as needed: caching values with small $p$ and $q$ was found to provide a negligible speedup while needlessly increasing memory consumption and code complexity.

Our implementation uses the minimal polynomial $\Phi_n(x)$ of $\cos(2\pi/n)$, which has degree $d = \phi(n) / 2$ for $n \ge 3$ \cite{WZ93}. More precisely, we use the scaled polynomial $2^d \Phi(x) \in \mathbb{Z}[x]$. This polynomial is looked up from a precomputed table when $n$ is small, and otherwise is generated using a balanced product tree, starting from floating-point approximations of the conjugate roots. As a side remark, this turned out to be around a thousand times faster than computing the minimal polynomial with the standard commands in either Sage or Mathematica.

We sketch the procedure for high-precision evaluation of $\cos(p \pi/q)$ as Algorithm~\ref{alg:minpoly}, omitting various special cases and implementation details (for example,
our implementation performs the polynomial multiplications over $\mathbb{Z}[x]$ by embedding the approximate coefficients as fixed-point numbers).

\begin{algorithm}
\caption{High-precision numerical evaluation of $\cos(p \pi / q)$}
\label{alg:minpoly}
\begin{algorithmic}
\renewcommand{\algorithmicrequire}{\textbf{Input:}}
\renewcommand{\algorithmicensure}{\textbf{Output:}}
\REQUIRE Coprime integers $p$ and $q$ with $q \ge 3$, and a precision $r$
\ENSURE An approximation of $\cos(p \pi / q)$ accurate to $r$ bits
\STATE $n \gets (1 + (p \bmod 2)) \, q$
\STATE $d \gets \phi(n) / 2$
\STATE \COMMENT{Bound coefficients in $2^d \prod_{i=1}^d(x-\alpha)$}
\STATE $b \gets \left\lceil \log_2 d \right\rceil + \left\lceil \log_2 {d \choose d / 2} \right\rceil$
\STATE \COMMENT{Use a balanced product tree and a precision of $b + O(\log d)$ bits}
\STATE $\Phi \gets 2^d \prod_{i=1,\gcd(i,n)=1}^{\operatorname{deg}(\Phi) \le d} (x - \cos(i\pi/n))$ \COMMENT{Use basecase algorithm for cos}
\STATE \COMMENT{Round to an integer polynomial}
\STATE $\Phi \gets \sum_{k=0}^d \left\lfloor [x^k] \Phi + \frac{1}{2}\right\rfloor x^k$
\STATE Compute precisions $r_0 = r + 8, r_1 = r_0/2 + 8, \ldots, r_j = r_{j-1}/2+8 < 50$
\STATE $x \gets \cos(p \pi / q)$ \COMMENT{To 50 bits, using basecase algorithm}
\FOR {$i \gets j-1, j-2 \ldots 0$}
  \STATE \COMMENT{Evaluate using the Horner scheme at $r_i + b$ bit precision}
  \STATE $x \gets x - \Phi(x)/\Phi'(x)$
\ENDFOR
\RETURN $x$
\end{algorithmic}
\end{algorithm}

We do not attempt to prove that the internal precision management of Algorithm~\ref{alg:minpoly} is correct. However, the polynomial generation can easily be tested up to an allowed bound for $q$, and the function can be tested to be correct for all pairs $p, q$ at some fixed, high precision $r$. We may then argue heuristically that the well-behavedness of the object function in the root-finding stage combined with the highly conservative padding of the precision by several bits per iteration suffices to ensure full accuracy at the end of each step in the final loop, given an arbitrary $r$.

A different way to generate $\Phi_n(x)$ using Chebyshev polynomials is described in \cite{WZ93}. One can also use the squarefree part of an offset Chebyshev polynomial
$$P(x) = \frac{T_{2q}(x) - 1}{\gcd(T_{2q}(x) - 1, T'_{2q}(x))}$$
directly, although this is somewhat less efficient than the minimal polynomial.

Alternatively, since $\cos(p\pi/q)$ is the real part of a root of unity, the polynomial $x^q - 1$ could be used. The use of complex arithmetic adds overhead, but the method would be faster for large $q$ since $x^q$ can be computed in time $O((\log q) M(r))$ using repeated squaring. We also note that the secant method could be used instead of the standard Newton iteration in Algorithm \ref{alg:minpoly}. This increases the number of iterations, but removes the derivative evaluation, possibly providing some speedup.

In our implementation, Algorithm \ref{alg:minpoly} was found to be faster than the MPFR trigonometric functions for $q < 250$ roughly when the precision exceeds $400 + 4q^2$ bits. This estimate includes the cost of generating the minimal polynomial on the fly. 

\subsection{The main algorithm}

Algorithm \ref{alg:rademain} outlines the main routine in FLINT with only minor simplifications.
To avoid possible corner cases in the convergence of the HRR sum,
and to avoid unnecessary overhead, values with $n < 128$ (exactly corresponding to $p(n) < 2^{32}$) are looked up from a table. We only use $k$, $n$, $N$ in Theorem \ref{thm:prec} in order to
make the precision decrease uniformly,
allowing amortized summation to be implemented in a simple way.

\begin{algorithm}
\caption{Main routine implementing the HRR formula}
\label{alg:rademain}
\begin{algorithmic}
\renewcommand{\algorithmicrequire}{\textbf{Input:}}
\renewcommand{\algorithmicensure}{\textbf{Output:}}
\REQUIRE $n \ge 128$
\ENSURE $p(n)$
\STATE Determine $N$ and initial precision $r_1$ using Theorem \ref{thm:prec}
\STATE $C \gets \frac{\pi}{6} \sqrt{24n-1}$ \COMMENT{At $r_1 + 3$ bits}
\STATE $u \gets \exp(C)$
\STATE $s_1 \gets s_2 \gets 0$
\FOR {$1 \le k \le N$}
    \STATE Write term $k$ as \eqref{eq:termk} by calling Algorithm \ref{alg:fastsum}
    \IF {$A_k(n) \ne 0$}
        \STATE Determine term precision $r_k$ for $|t_k|$ using Theorem \ref{thm:prec}
        \STATE \COMMENT{Use Algorithm \ref{alg:minpoly} if $q_i < 250$ and $r_k > 400+4q^2$}
        \STATE $t \gets (-1)^s \sqrt{a/b}\prod \cos(p_i \pi / q_i)$
        \STATE $t \gets t \times U(C / k)$ \COMMENT{Compute $U$ from $u^{1/k}$ if $k < 35$}
        \STATE \COMMENT{Amortized summation: $r(s_2)$ denotes precision of the variable $s_2$}
        \STATE $s_2 \gets s_2 + t$
        \IF {$2 r_k < r(s_2)$}
            \STATE $s_1 \gets s_1 + s_2$ \COMMENT{Exactly or with precision exceeding $r_1$}
            \STATE $r(s_2) \gets r_k$ \COMMENT{Change precision}
            \STATE $s_2 \gets 0$
        \ENDIF
    \ENDIF
\ENDFOR
\RETURN $\lfloor s_1 + s_2 + \frac{1}{2}\rfloor$
\end{algorithmic}
\end{algorithm}

Since our implementation presently relies on some numerical heuristics (and in any case, considering the intricacy of the algorithm),
care has been taken to test it extensively. All $n \le 10^6$ have been checked explicitly, and a large number of isolated $n \gg 10^6$ have been compared against known congruences and values computed with Sage and Mathematica.

As a strong robustness check, we observe experimentally that the numerical error in the final sum decreases with larger $n$. For example, the error is consistently smaller than $10^{-3}$ for $n > 10^6$ and smaller than $10^{-4}$ for $n > 10^9$. This phenomenon reflects the fact that \eqref{eq:bound} overshoots the actual magnitude of the terms with large $k$, combined with the fact that rounding errors average out pseudorandomly rather than approaching worst-case bounds.

\section{Benchmarks}

Table \ref{tab:timings} and Figure \ref{fig:plot} compare performance of Mathematica 7, Sage 4.7 and FLINT on a laptop with a Pentium T4400 2.2 GHz CPU and 3 GiB of RAM, running 64-bit Linux. To the author's knowledge, Mathematica and Sage contain the fastest previously available partition function implementations by far.

The FLINT code was run with MPIR version 2.4.0 and MPFR version 3.0.1. Since Sage 4.7 uses an older version of MPIR and Mathematica is based on an older version of GMP, differences in performance of the underlying arithmetic slightly skew the comparison, but probably not by more than a factor two.

The limited memory of the aforementioned laptop restricted the range of feasible $n$ to approximately $10^{16}$. Using a system with an AMD Opteron 6174 processor and 256 GiB RAM allowed calculating $p(10^{17})$, $p(10^{18})$ and $p(10^{19})$ as well. The last computation took just less than 100 hours and used more than 150 GiB of memory, producing a result with over 11 billion bits. Some large values of $p(n)$ are listed in Table \ref{tab:values}.

\begin{table}
\begin{center}
\begin{tabular}{ | c | c | c | c | c | }
\hline
$n$ & Mathematica 7 & Sage 4.7 & FLINT & Initial \\
\hline
$10^4$ & 69 ms & 1 ms    & 0.20 ms & \\
$10^5$ & 250 ms & 5.4 ms & 0.80 ms & \\
$10^6$ & 590 ms & 41 ms & 2.74 ms & \\
$10^7$ & 2.4 s & 0.38 s & 0.010 s & \\
$10^8$ & 11 s & 3.8 s  & 0.041 s & \\
$10^9$ & 67 s & 42 s   & 0.21 s & 43\% \\
$10^{10}$ & 340 s &    & 0.88 s & 53\% \\
$10^{11}$ & 2,116 s &   & 5.1 s & 48\% \\
$10^{12}$ & 10,660 s &  & 20 s & 49\% \\
$10^{13}$ &  &         & 88 s & 48\% \\
$10^{14}$ &  &         & 448 s & 47\% \\
$10^{15}$ &  &         & 2,024 s & 39\% \\
$10^{16}$ &  &         & 6,941 s & 45\% \\
$10^{17}$ &  &         & 27,196* s & 33\% \\
$10^{18}$ &  &         & 87,223* s & 38\% \\
$10^{19}$ &  &         & 350,172* s & 39\% \\
\hline
\end{tabular}
\end{center}
\caption{Timings for computing $p(n)$ in Mathematica 7, Sage 4.7 and FLINT up to $n = 10^{16}$ on the same system, as well as FLINT timings for $n = 10^{17}$ to $10^{19}$ (*) done on different (slightly faster) hardware. Calculations running less than one second were repeated, allowing benefits from data caching. The rightmost column shows the amount of time in the FLINT implementation spent computing the first term.}
\label{tab:timings}
\end{table}


As can be seen in Table \ref{tab:timings} and Figure \ref{fig:plot}, the FLINT implementation exhibits a time complexity only slightly higher than $O(n^{1/2})$, with a comparatively small constant factor. The Sage implementation is fairly efficient for small $n$ but has a complexity closer to $O(n)$, and is limited to arguments $n < 2^{32} \approx 4 \times 10^9$.

\begin{figure}
\includegraphics[scale=0.64]{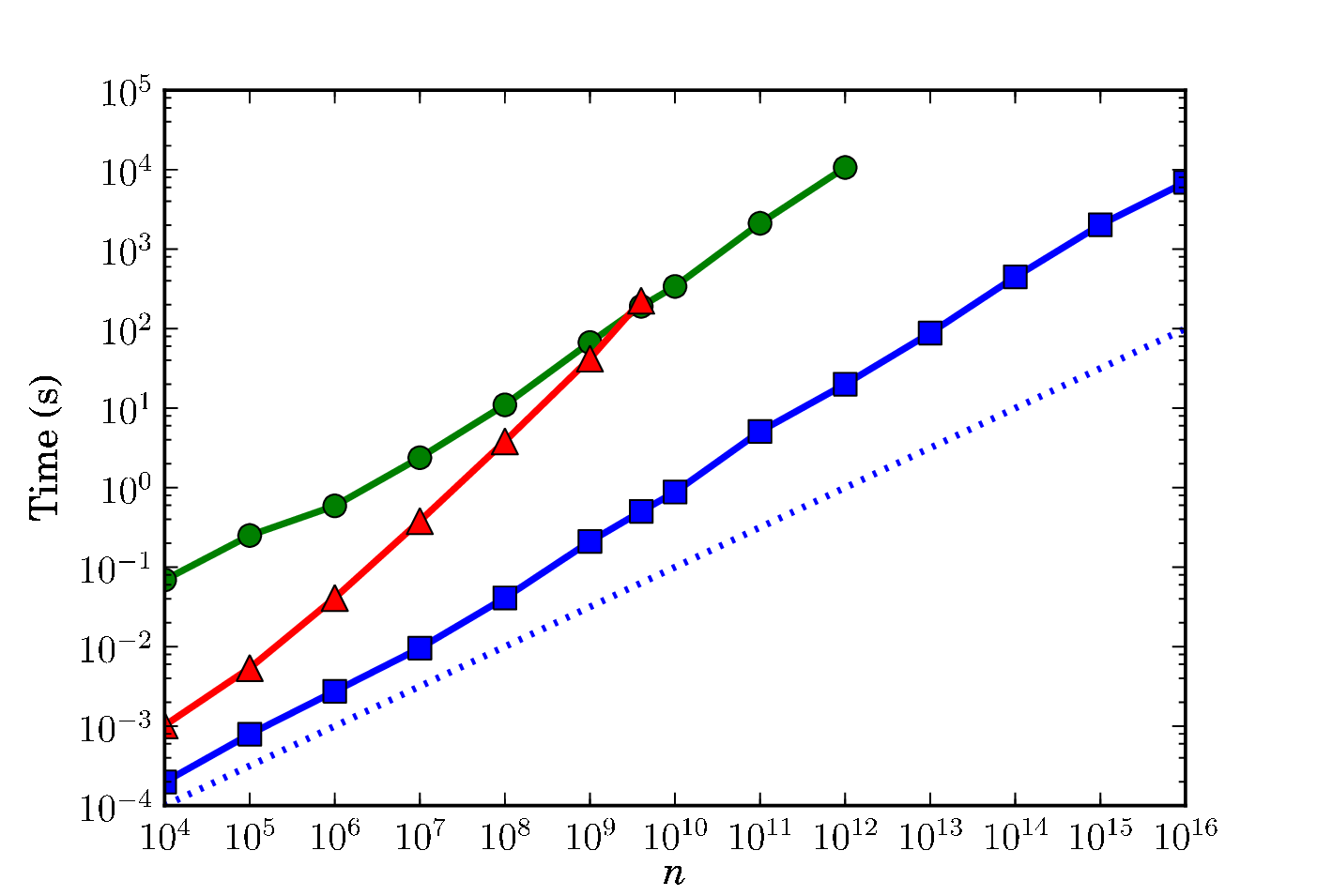}
\caption{CPU time $t$ in seconds for computing $p(n)$: FLINT (blue squares), Mathematica 7 (green circles), Sage 4.7 (red triangles). The dotted line shows $t = 10^{-6} n^{1/2}$, indicating
the slope of an idealized algorithm satisfying the trivial lower complexity bound $\Omega(n^{1/2})$ (the offset $10^{-6}$ is arbitrary).}
\label{fig:plot}
\end{figure}

The Mathematica implementation appears to have complexity slightly higher than $O(n^{1/2})$ as well, but consistently runs about 200--500 times slower than our implementation. Based on extrapolation, computing $p(10^{19})$ would take several years. It is unclear whether Mathematica is actually using a nearly-optimal algorithm or whether the slow growth is just the manifestation of various overheads dwarfing the true asymptotic behavior. The ratio compared to FLINT appears too large to be explained by differences in performance of the underlying arithmetic alone; for example, evaluating the first term in the series for $p(10^{10})$ to required precision in Mathematica only takes about one second.

We get one external benchmark from \cite{borwein2}, where it is reported that R. Crandall computed $p(10^9)$ in three seconds on a laptop in December 2008, ``using the Hardy-Ramanujan-Rademacher `finite' series for $p(n)$ along with FFT methods''. Even accounting for possible hardware differences, this appears to be an order of magnitude slower than our implementation.

\subsection{Optimality relative to the first term}

Table \ref{tab:timings} includes time percentages spent on evaluating the first term, $\exp(C)$, in the FLINT implementation. We find that this step fairly consistently amounts to just a little less than half of the running time. Our implementation is therefore nearly optimal in a practical sense, since the first term in the Hardy-Ramanujan-Rademacher expansion hardly can be avoided and at most a factor two can be gained by improving the tail evaluation.

\begin{table}
\begin{center}
\begin{tabular}{ | c | c | c | c | c | }
\hline
$n$ & Decimal expansion & Number of digits & Terms & Error \\
\hline
$10^{12}$ & $6129000962\ldots 6867626906$ & 1,113,996 & 264,526 & $2 \times 10^{-7}$ \\
$10^{13}$ & $5714414687 \ldots 4630811575$ & 3,522,791 & 787,010 & $3 \times 10^{-8}$  \\
$10^{14}$ & $2750960597 \ldots 5564896497$ & 11,140,072 & 2,350,465 & $-1 \times 10^{-8}$  \\
$10^{15}$ & $1365537729 \ldots 3764670692$ & 35,228,031 & 7,043,140 & $-3 \times 10^{-9}$  \\
$10^{16}$ & $9129131390 \ldots 3100706231$ & 111,400,846 & 21,166,305 & $-9 \times 10^{-10}$  \\
$10^{17}$ & $8291300791 \ldots 3197824756$ & 352,280,442 & 63,775,038 & $5 \times 10^{-10}$  \\
$10^{18}$ & $1478700310 \ldots 1701612189$ & 1,114,008,610 & 192,605,341 & $4 \times 10^{-10}$  \\
$10^{19}$ & $5646928403 \ldots 3674631046$ & 3,522,804,578 & 582,909,398 & $4 \times 10^{-11}$  \\
\hline
\end{tabular}
\end{center}
\caption{Large values of $p(n)$. The table also lists the number of terms $N$
in the Hardy-Ramanujan-Rademacher formula used by FLINT (theoretically bounding the error by 0.25) and the difference between the floating-point sum and the rounded integer.}
\label{tab:values}
\end{table}

Naturally, there is some potential to implement a faster version of the exponential function than the one provided by MPFR, reducing the cost of the first term. Improvements on the level of bignum multiplication would, on the other hand, presumably have a comparatively uniform effect.

By similar reasoning, at most a factor two can be gained through parallelization of our implementation by assigning terms in the Hardy-Ramanujan-Rademacher sum to separate threads. Further speedup on a multicore system requires parallelized versions of lower level routines, such as the exponential function or bignum multiplication. (A simplistic multithreaded version of the FLINT partition function was tested for $n$ up to $10^{18}$, giving nearly a twofold speedup on two cores, but failed when computing $10^{19}$ for reasons yet to be determined.) Fortunately, it is likely to be more interesting in practice to be able to evaluate $p(n)$ for a range of large values than just for a single value, and this task naturally parallelizes well.

\section{Multi-evaluation and congruence generation}

One of the central problems concerning the partition function
is the distribution of values of $p(n) \bmod m$.
In 2000, Ono
\cite{ono} proved that for every prime $m \ge 5$,
there exist infinitely many congruences of the type
\begin{equation}
p(Ak+B) \equiv 0 \bmod m
\label{eq:cong}
\end{equation}
where $A, B$ are fixed and $k$ ranges over all nonnegative integers.
Ono's proof is nonconstructive, but Weaver \cite{weaver} subsequently
gave an algorithm for finding congruences of this type when
$m \in \{13, 17, 19, 23, 29, 31\}$, and used the algorithm to compute
76,065 explicit congruences.

Weaver's congruences are specified by a tuple $(m,\ell,\varepsilon)$
where $\ell$ is a prime and $\varepsilon \in \{-1,0,1\}$, where
we unify the notation by writing $(m,\ell,0)$
in place of Weaver's $(m,\ell)$.
Such a tuple corresponds to a family of congruences
of the form \eqref{eq:cong} with coefficients
\begin{align}
\label{eq:Aeq}
A & = m \ell^{4-|\varepsilon|} \\
B & = \frac{m \ell^{3-|\varepsilon|} \alpha+1}{24} + m \ell^{3-|\varepsilon|} \delta,
\end{align}
where $\alpha$ is the unique solution of
$m \ell^{3-|\varepsilon|} \alpha \equiv -1 \bmod 24$
with $1 \le \alpha < 24$, and where $0 \le \delta < \ell$ is any solution of
\begin{align}
\begin{cases}
24 \delta \not\equiv -\alpha \bmod \ell
& \text{if } \varepsilon = 0 \\
\left(24\delta+\alpha \;| \;\ell \right) = \varepsilon
& \text{if } \varepsilon = \pm1.
\end{cases}
\label{eq:lsolv}
\end{align}

The free choice of $\delta$ gives $\ell - 1$
distinct congruences for a given tuple $(m,\ell,\varepsilon)$
if $\varepsilon = 0$, and $(\ell-1)/2$
congruences if $\varepsilon = \pm 1$.

Weaver's test for congruence, described by Theorems 7 and 8 in \cite{weaver}, essentially amounts to a single evaluation of $p(n)$ at a special point $n$. Namely, for given $m, \ell$, we compute the smallest solutions of $\delta_m \equiv 24^{-1} \bmod m$, $r_m \equiv -m \bmod 24$, and check whether $p(m r_m (\ell^2-1)/24 + \delta_m)$ is congruent mod $m$ to one of three values corresponding to the parameter $\varepsilon \in \{-1,0,1\}$. We give a compact statement of this procedure as Algorithm~\ref{alg:congruence}. To find new congruences, we simply perform a brute force search over a set of candidate primes $\ell$, calling Algorithm \ref{alg:congruence} repeatedly.

\begin{algorithm}
\caption{Weaver's congruence test}
\label{alg:congruence}
\begin{algorithmic}
\renewcommand{\algorithmicrequire}{\textbf{Input:}}
\renewcommand{\algorithmicensure}{\textbf{Output:}}
\REQUIRE A pair of prime numbers $13 \le m \le 31$ and $\ell \ge 5$, $m \ne \ell$
\ENSURE $(m,\ell,\varepsilon)$ defining a congruence, and Not-a-congruence otherwise
\STATE $\delta_m \gets 24^{-1} \bmod m$ \COMMENT{Reduced to $0 \le \delta_m < m$}
\STATE $r_m \gets (-m) \bmod 24$ \COMMENT{Reduced to $0 \le m < 24$}
\STATE $v \gets \frac{m - 3}{2}$
\STATE $x \gets p(\delta_m)$  \COMMENT {We have $x \not\equiv 0 \bmod m$}
\STATE $y \gets p\left(m \left(\frac{r_m (\ell^2 - 1)}{24} \right) + \delta_m \right)$
\STATE $f \gets (3 \;|\; \ell)\; ((-1)^{v} r_m \;|\; \ell)$ \COMMENT{Jacobi symbols}
\STATE $t \gets y + f x \ell^{v - 1}$
\IF {$t \equiv \omega \bmod m$ where $\omega \in \{-1,0,1\}$}
    \RETURN $(m, \ell, \omega \, (3 (-1)^{v}\; |\; \ell))$
\ELSE
    \RETURN Not-a-congruence
\ENDIF
\end{algorithmic}
\end{algorithm}

\subsection{Comparison of algorithms for vector computation}

In addition to the Hardy-Ramanujan-Rademacher formula, the
author has added code to FLINT for computing the vector of values
$p(0), p(1), \ldots p(n)$ over $\mathbb{Z}$ and
$\mathbb{Z}/m\mathbb{Z}$. The code is straightforward,
simply calling the default FLINT routines
for power series inversion over the respective coefficient rings,
which in both cases invokes Newton iteration and FFT multiplication
via Kronecker segmentation.

A timing comparison between the various methods
for vector computation is shown in Table~\ref{tab:seriescompare}.
The power series method is clearly the best choice for computing
all values up to $n$ modulo a fixed prime, having a complexity
of $O(n^{1+o(1)})$. For computing the full integer values, the power series
and HRR methods both have complexity $O(n^{3/2+o(1)})$,
with the power series method expectedly winning.

\begin{table}
\begin{center}
\begin{tabular}{ | c | c | c | c | c | }
\hline
$n$ & Series ($\mathbb{Z}/13\mathbb{Z}$) & Series ($\mathbb{Z}$) & HRR (all) & HRR (sparse) \\
\hline
$10^4$ & 0.01 s & 0.1 s & 1.4 s & 0.001 s \\
$10^5$ & 0.13 s & 4.1 s & 41 s & 0.008 s \\
$10^6$ & 1.4 s  & 183 s & 1430 s & 0.08 s \\
$10^7$ & 14 s   &       & & 0.7 s \\
$10^8$ & 173 s  &       & & 8 s   \\
$10^9$ & 2507 s &       & & 85 s  \\
\hline
\end{tabular}
\end{center}
\caption{Comparison of time needed to compute multiple values of $p(n)$ up to
the given bound, using power series inversion and the Hardy-Ramanujan-Rademacher
formula. The rightmost column gives the time when only computing the
subset of terms that are searched with Weaver's algorithm in the $m = 13$ case.}
\label{tab:seriescompare}
\end{table}

Ignoring logarithmic factors, we can expect the HRR formula
to be better than the power series for multi-evaluation of $p(n)$
up to some bound $n$ when $n / c$ values are needed. The factor $c \approx 10$ in the
FLINT implementation is a remarkable improvement
over $c \approx 1000$ attainable with previous implementations
of the partition function.
For evaluation mod $m$, the HRR formula is competitive when $O(n^{1/2})$
values are needed; in this case, the constant is highly sensitive to $m$.

For the sparse subset of $O(n^{1/2})$ terms
searched with Weaver's algorithm,
the HRR formula has the same complexity as the modular power series method,
but as seen in Table \ref{tab:seriescompare} runs more than an order of magnitude faster.
On top of this, it has the advantage of parallelizing trivially,
being resumable from any point, and requiring very little memory
(the power series evaluation mod $m = 13$ up to $n = 10^9$ required
over 40 GiB memory, compared to a few megabytes with the HRR formula).
Euler's method is, of course, also resumable from an arbitrary
point, but this requires computing and storing all previous values.

We mention that the authors of \cite{calkin} use a
parallel version of the recursive Euler method. This
is not as efficient as power series inversion,
but allows the computation to be split across multiple
processors more easily.

\subsection{Results}

Weaver gives 167 tuples, or 76,065 congruences, containing all
$\ell$ up to approximately 1,000--3,000 (depending on $m$).
This table was generated by computing all values of $p(n)$
with $n < 7.5 \times 10^6$
using the recursive version of Euler's pentagonal theorem.
Computing Weaver's table from scratch with FLINT,
evaluating only the necessary $n$, takes just a few seconds.
We are also able to numerically verify instances of
all entries in Weaver's table for small $k$.

As a more substantial exercise, we extend Weaver's table by determing
all $\ell$ up to $10^6$ for each
prime $m$. Statistics are listed in Table \ref{tab:results}.
The computation was performed by assigning
subsets of the search space to separate processes, running on
between 40 and 48 active cores for a period of four days,
evaluating $p(n)$ at $6 (\pi(10^6) - 3) = 470,970$ distinct $n$
ranging up to $2 \times 10^{13}$.

We find a total of 70,359 tuples, corresponding
to slightly more than $2.2~\times~10^{10}$ new congruences.
To pick an arbitrary, concrete example, one ``small'' new congruence is
$(13, 3797, -1)$ with $\delta = 2588$, giving
$$p(711647853449k + 485138482133) \equiv 0 \bmod 13$$
%
which we easily evaluate for all $k \le 100$, providing a sanity check on the identity as well as the partition function implementation. As a larger example, $(29, 999959, 0)$ with $\delta = 999958$
gives
$$p(28995244292486005245947069k + 28995221336976431135321047) \equiv 0 \bmod 29$$
which, despite our efforts, presently is out of reach for direct evaluation.

Complete tables of $(\ell,\varepsilon)$ for each $m$
are available at: \\
\url{http://www.risc.jku.at/people/fjohanss/partitions/} \\
\url{http://sage.math.washington.edu/home/fredrik/partitions/}

\begin{table}
\begin{center}
\begin{tabular}{ | c | c | c | c | c | c | c | }
\hline
$m$ & $(m,\ell,0)$ & $(m,\ell,+1)$ & $(m,\ell,-1)$ & Congruences & CPU & Max $n$ \\
\hline
$13$ & 6,189 & 6,000 & 6,132 & 5,857,728,831 & 448 h & $5.9 \times 10^{12}$ \\
$17$ & 4,611 & 4,611 & 4,615 & 4,443,031,844 & 391 h & $4.9 \times 10^{12}$ \\
$19$ & 4,114 & 4,153 & 4,152 & 3,966,125,921 & 370 h & $3.9 \times 10^{12}$ \\
$23$ & 3,354 & 3,342 & 3,461 & 3,241,703,585 & 125 h & $9.5 \times 10^{11}$  \\
$29$ & 2,680 & 2,777 & 2,734 & 2,629,279,740 & 1,155 h & $2.2 \times 10^{13}$  \\
$31$ & 2,428 & 2,484 & 2,522 & 2,336,738,093 & 972 h & $2.1 \times 10^{13}$  \\
\hline
All  & 23,376 & 23,367 & 23,616 & 22,474,608,014 & 3,461 h &  \\
\hline
\end{tabular}
\end{center}
\caption{The number of tuples of the given
type with $\ell < 10^6$, the total number
of congruences defined by these tuples, the total CPU time,
and the approximate bound up to which $p(n)$ was evaluated.}
\label{tab:results}
\end{table}

\section{Discussion}

Two obvious improvements to our implementation would be to develop a rigorous, and perhaps faster, version of Algorithm \ref{alg:minpoly} for computing $\cos(p \pi / q)$ to high precision, and to develop fast multithreaded implementations of transcendental functions to allow computing $p(n)$ for much larger $n$. Curiously, a particularly simple AGM-type iteration is known for $\exp(\pi)$ (see \cite{borwein}), and it is tempting to speculate whether a similar algorithm can be constructed for $\exp(\pi \sqrt{24n-1})$, allowing faster evaluation of the first term.

Some performance could also be gained with faster low-precision transcendental functions (up to a few thousand bits) and by using a better bound than \eqref{eq:bound} for the truncation error.

The algorithms described in this paper can be adapted to evaluation of
other HRR-type series, such as the number of partitions into distinct parts

\begin{equation}
Q(n) = \frac{\pi^2 \sqrt{2}}{24} \sum_{k=1}^{\infty} \frac{A_{2k-1}(n)}{(1-2k)^2}
\,_0F_1\left(2, \frac{(n+\frac{1}{24}) \pi^2}{12(1-2k)^2}\right).
\end{equation}

Using asymptotically fast methods for numerical evaluation of
hypergeometric functions, it should be possible to
retain quasi-optimality.

Finally, it remains an open problem whether there is a fast way to compute the isolated value $p(n)$ using purely algebraic methods. We mention the interesting recent work by Bruinier and Ono \cite{bo}, which perhaps could lead to such an algorithm.

\section{Acknowledgements}

The author thanks Silviu Radu for suggesting the application
of extending Weaver's table of congruences, and for explaining
Weaver's algorithm in detail. The author also thanks the anonymous referee for various suggestions, and Jonathan Bober for pointing out that Erd\H{o}s' theorem about quadratic nonresidues gives a rigorous complexity bound without assuming the Extended Riemann Hypothesis.

Finally, William Hart gave valuable feedback on various issues, and generously provided access to the computer hardware used for the large-scale computations reported in this paper. The hardware was funded by Hart's EPSRC Grant EP/G004870/1 (\mbox{Algorithms} in Algebraic Number Theory) and hosted at the University of Warwick.

\affiliationone{
   Fredrik Johansson\\
   Research Institute for Symbolic Computation\\
   Johannes Kepler University \\
   4040 Linz, Austria \\
   \email{fredrik.johansson@risc.jku.at}}

\end{document}